\theoremstyle{plain}
\newtheorem{theorem}{Theorem}
\newtheorem{lemma}[theorem]{Lemma}
\newtheorem{proposition}[theorem]{Proposition}
\theoremstyle{definition}
\theoremstyle{remark}
\newtheorem{remark}[theorem]{Remark}
\newtheorem*{acknowledgments}{Acknowledgments}
\newcommand\nc{\newcommand}
\nc\rnc{\renewcommand}
\nc\op{{\operatorname{op}}}
\nc\Ob{\operatorname{Ob}}
\nc\Set{\mathbf{Set}}
\nc\zzzvert {\;|\;}
\nc\zzzcolon {\colon\thinspace}
\nc\modQ {{\mathbb Q}}
\nc\modZ {{\mathbb Z}}
\nc\simeqto{\overset{\simeq}{\longrightarrow}}
\rnc\H{{\mathsf H}}
\nc\F{{\mathsf F}}
\nc\conv{\mathrm{conv}}
\nc\modF {\mathbf{F}}
\nc\modH {\mathbf{H}}
\nc\modB {\mathbf{B}}
\nc\modC {{\mathcal C}}
\nc\cv{_{\conv}}
\nc\Grp{\mathbf{Grp}}
\rnc\S{\mathfrak{S}}
\nc\Top{\mathbf{Top}}
\nc\Vect{\mathbf{Vect}}
\nc\cH{\mathcal{H}}
\nc\cB{\mathcal{B}}
\nc\bA{\mathbf{A}}
\nc\no[1]{}
\begin{document}

\title{On the category of finitely generated free groups}

\author{Kazuo Habiro}
\address{Research Institute for Mathematical Sciences\\ Kyoto
   University\\ Kyoto\\ 606-8502\\ Japan}
\email{habiro@kurims.kyoto-u.ac.jp}
\date{September 21, 2016 (First version)}

\begin{abstract}
  It is well known that the opposite $\modF ^\op$ of the category $\modF $ of
  finitely generated free groups is a Lawvere theory for groups, and
  also that $\modF $ is a free symmetric monoidal category on a
  commutative Hopf monoid, or, in other words, a PROP for commutative
  Hopf algebras.  In this paper, we give a direct, combinatorial proof
  of the latter fact, without using Lawvere theories.
\end{abstract}

\maketitle

\section{Introduction}

A {\em Lawvere theory} \cite{Lawvere} is a category $T$ with finite
products, equipped with an object $x$ such that every object of $T$ is
isomorphic to the $n$th power $x^n$ of $x$ for some $n\ge 0$.  A {\em
$T$-algebra} or an {\em algebra} over $T$ is a product preserving
functor $A\zzzcolon T\rightarrow \Set$.  It is well known that the opposite $\modF ^\op$ of
the category $\modF $ of finitely generated free groups is a Lawvere
theory, and that the $\modF ^\op$-algebras are naturally identified with
groups.  In other words, $\modF ^\op$ is a {\em Lawvere theory for
groups}.

A Lawvere theory and its opposite category are (essentially) a special
kind of {\em PROP}s.  A PROP is a symmetric strict monoidal category
$P$ equipped with one object $x$ such that $\Ob(P)=\{x^{\otimes n}\zzzvert n\ge 0\}$.
For a fixed field $k$, a {\em $P$-algebra} over $k$ is a symmetric
monoidal functor $F\zzzcolon P\rightarrow \Vect_k$, where $\Vect_k$ is the category of
$k$-vector spaces.  There are PROPs corresponding to many notions of
``algebras'' over $k$ such as (associative, unital) algebras,
coalgebras, bialgebras, Hopf algebras, etc., see \cite{Markl}.  In
this paper, we consider the PROP for {\em commutative Hopf algebras},
which is a PROP $\modH $ such that the $\modH $-algebras are identified with
commutative Hopf algebras.  The PROP $\modH $ can be defined also as the
free symmetric monoidal category generated by a commutative Hopf
monoid.  See Section \ref{sec:category-H} for the definition.

Pirashvili \cite{Pirashvili} proved that $\modF ^\op$ is a PROP for
cocommutative Hopf algebras, relying on the fact that $\modF ^\op$ is the
Lawvere theory for groups.  Equivalently, $\modF $ is a PROP for
commutative Hopf algebras, i.e., we have an isomorphism of symmetric
monoidal categories
\begin{gather}
  \label{e23}
  \modH \simeq \modF .
\end{gather}

In this paper, we will give a direct, self-contained proof of the
isomorphism \eqref{e23}, without using Lawvere theories.  Our proof is
combinatorial in the sense that it does not involve the notion of
(co)products in categories, which is used in the definition of Lawvere
theories.

An advantage of this combinatorial proof is that it admits
generalizations to categories, possibly without (co)products.  In
Section \ref{sec:motiv-from-topol}, we discuss such generalizations
with motivations in topology.  Our first motivation of writing this
paper is to provide a prototype for the proof of the results mentioned
in Sections \ref{sec:categ-handl-embedd} and
\ref{sec:categ-chord-diagr}, but we also hope that our proof of
\eqref{e23} is simpler and easier to understand.

\subsection{Organization of the paper}

The rest of this paper is organized as follows.
In Section \ref{sec:hopf-mono-symm}, we recall the notions of Hopf
monoids in symmetric monoidal categories and convolutions, and give
some basic constructions.
In Section \ref{sec:category-H}, we define the category $\modH $ and a
Hopf monoid $\H$ in $\modH $, and prove some necessary results.  For
$m,n\ge 0$, we define a surjective monoid homomorphism
\begin{gather*}
  \alpha _{m,n}\zzzcolon F_n^m \rightarrow \modH (m,n)_{\conv},
\end{gather*}
where $F_n^m$ is the the direct product of $m$ copies of the free
group $F_n=\langle x_1,\ldots,x_n\rangle $ of rank $n$, and $\modH (m,n)_{\conv}$ is
the convolution monoid on the Hom set $\modH (m,n)$.
In Section \ref{sec:category-F}, we define the symmetric monoidal
category $\modF $, a Hopf monoid $\F$ in $\modF $, and a symmetric monoidal
functor
\begin{gather*}
  T\zzzcolon \modH \longrightarrow \modF ,
\end{gather*}
which maps the Hopf monoid structure of $\H$ to that of $\F$.  We
construct a group isomorphism
\begin{gather*}
  \tau _{m,n}\zzzcolon F_n^m\simeqto\modF (m,n)_{\conv},
\end{gather*}
where $\modF (m,n)_{\conv}$ is the convolution monoid on $\modF (m,n)$.
In Section \ref{sec:isomorphism-between-}, we prove that the functor
$T\zzzcolon \modH \rightarrow \modF $ is an isomorphism by using the commutative diagram
\eqref{e24}.
In Section \ref{sec:coproducts-}, we give a direct proof of the
well-known fact that $\modH $ admits finite coproducts, without relying on
the results for $\modF $.
Section \ref{sec:motiv-from-topol} is a brief account of categories
closely related to $\modH $, which provides our motivation to study the
direct combinatorial proof of $\modH \simeq\modF $.

\begin{acknowledgments}
  The author thanks Gw\'ena\"el Massuyeau for helpful comments.

  This work is partially supported by JSPS KAKENHI Grant Number
  15K04873.
\end{acknowledgments}

\section{Hopf monoids in symmetric monoidal categories}
\label{sec:hopf-mono-symm}

Let $\modC =(\modC ,\otimes ,I,P)$ be a symmetric strict monoidal category, where
$I$ denotes the unit object and
\begin{gather*}
  P_{X,Y}\zzzcolon X\otimes Y\simeqto Y\otimes X
\end{gather*}
denotes the symmetry.

\subsection{Hopf monoids}
\label{sec:hopf-monoids}

A {\em Hopf monoid} (also called {\em Hopf algebra}) in $\modC $ is an
  object $H$ in $\modC $ equipped with morphisms
\begin{gather*}
  \mu \zzzcolon H\otimes H\rightarrow H,\quad \eta \zzzcolon I\rightarrow H,\quad \Delta \zzzcolon H\rightarrow H\otimes H,\quad \epsilon \zzzcolon H\rightarrow I,\quad
  S\zzzcolon H\rightarrow H,
\end{gather*}
called the {\em multiplication}, {\em unit}, {\em comultiplication},
  {\em counit} and {\em antipode}, respectively, satisfying
\begin{gather}
  \label{e9}
  \mu (\mu \otimes H)=\mu (H\otimes \mu ),\quad
  \mu (\eta \otimes H)=1_H=\mu (H\otimes \eta ),\\
  \label{e10}
  (\Delta \otimes H)\Delta =(H\otimes \Delta )\Delta ,\quad
  (\epsilon \otimes H)\Delta =1_H=(H\otimes \epsilon )\Delta ,\\
  \label{e11}
  \epsilon \eta =1_I,\quad
  \epsilon \mu =\epsilon \otimes \epsilon ,\quad
  \Delta \eta =\eta \otimes \eta ,\\
  \label{e18}
  \Delta \mu =(\mu \otimes \mu )(H\otimes P_{H,H}\otimes H)(\Delta \otimes \Delta ),\\
  \label{e19}
  \mu (H\otimes S)\Delta =\mu (S\otimes H)\Delta =\eta \epsilon .
\end{gather}
$H$ is said to be {\em commutative} if
\begin{gather}
  \label{e8}
  \mu P_{H,H}=\mu .
\end{gather}
As is well known, in a commutative Hopf monoid, the antipode $S$ is
involutive: $S^2=1_H$.

The iterated multiplications and comultiplications
\begin{gather*}
  \mu ^{[n]}\zzzcolon H^{\otimes n}\rightarrow H,\quad \Delta ^{[n]}\zzzcolon H\rightarrow H^{\otimes n}
\end{gather*}
for $n\ge 0$ are defined inductively by
\begin{gather*}
  \mu ^{[0]}=\eta ,\quad \mu ^{[1]}=1_H,\quad \mu ^{[n]}=\mu (\mu ^{[n-1]}\otimes H)\quad
  (n\ge 2),\\
  \Delta ^{[0]}=\epsilon ,\quad \Delta ^{[1]}=1_H,\quad
  \Delta ^{[n]}=(\Delta ^{[n-1]}\otimes H)\Delta \quad (n\ge 2).
\end{gather*}
They satisfy the generalized (co)associativity relations:
\begin{gather}
  \mu ^{[m]}(\mu ^{[k_1]}\otimes \cdots\otimes \mu ^{[k_m]})=\mu ^{[k_1+\cdots+k_m]},\\
  (\Delta ^{[k_1]}\otimes \cdots\otimes \Delta ^{[k_m]})\Delta ^{[m]}=\Delta ^{[k_1+\cdots+k_m]}
\end{gather}
for $m\ge 0,k_1,\ldots,k_m\ge 0$.

For a sequence $n_1,\ldots,n_p\ge 0$, $p\ge 0$, we set
\begin{gather*}
  \mu ^{[n_1,\ldots,n_p]} = \mu ^{[n_1]}\otimes \cdots\otimes \mu ^{[n_p]},\\
  \Delta ^{[n_1,\ldots,n_p]} = \Delta ^{[n_1]}\otimes \cdots\otimes \Delta ^{[n_p]}.
\end{gather*}

\subsection{Convolutions}

A {\em monoid} in $\modC $ is an object $H$ equipped with morphisms
$\mu \zzzcolon H\otimes H\rightarrow H$ and $\eta \zzzcolon I\rightarrow H$ satisfying \eqref{e9}, and a {\em
comonoid} in $\modC $ is an object $H$ equipped with morphisms
$\Delta \zzzcolon H\rightarrow H\otimes H$ and $\epsilon \zzzcolon H\rightarrow I$ satisfying \eqref{e10}.

Let $A=(A,\mu _A,\eta _A)$ be a monoid in $\modC $, and $C=(C,\Delta _C,\epsilon _C)$ a
comonoid in $\modC $.  Then the set $\modC (C,A)$ of morphisms from $C$ to $A$
in $\modC $ is equipped with a monoid structure, with multiplication given
by the {\em convolution}
\begin{gather*}
  f * f' := \mu _A(f\otimes f')\Delta _C
\end{gather*}
for $f,f'\in \modC (C,A)$, and with unit given by $\eta _A\epsilon _C\in \modC (C,A)$.  Let
$\modC (C,A)_{\conv}$ denote this monoid.

In this paper, we use convolutions in the following special situation.

Let $H$ be a Hopf monoid in $\modC $.  Then, for $m\ge 0$, the tensor power
$H^{\otimes m}$ has a monoid structure
\begin{gather*}
  \mu _m\zzzcolon H^{\otimes m}\otimes H^{\otimes m}\rightarrow H^{\otimes m},\quad
  \eta _m:=\eta ^{\otimes m}\zzzcolon I\rightarrow H^{\otimes m},
\end{gather*}
where $\mu _m$ is defined inductively by $\mu _0=1_I$ and
\begin{gather*}
  \mu _{m+1}=(\mu _m\otimes \mu )(H^{\otimes m}\otimes P_{H,H^{\otimes m}}\otimes H)\quad (m\ge 0).
\end{gather*}
Similarly, $H^{\otimes m}$ has a comonoid structure
\begin{gather*}
  \Delta _m\zzzcolon H^{\otimes m}\rightarrow H^{\otimes m}\otimes H^{\otimes m},\quad
  \epsilon _m:=\epsilon ^{\otimes m}\zzzcolon H^{\otimes m}\rightarrow I,
\end{gather*}
where $\Delta _m$ is defined inductively by $\Delta _0=1_I$ and
\begin{gather*}
  \Delta _{m+1}=(H^{\otimes m}\otimes P_{H^{\otimes m},H}\otimes H)(\Delta _m\otimes \Delta )\quad (m\ge 0).
\end{gather*}
(The monoid $(H^{\otimes m},\mu _m,\eta _m)$ and the comonoid
$(H^{\otimes m},\Delta _m,\eta _m)$ are part of the Hopf monoid structure on
$H^{\otimes m}$, with the antipode $S_m:=S^{\otimes m}\zzzcolon H^{\otimes m}\rightarrow H^{\otimes m}$.)

For $f,g\in H^{\otimes m}\rightarrow H^{\otimes n}$, $m,n\ge 0$, the {\em convolution}
$f*g\zzzcolon H^{\otimes m}\rightarrow H^{\otimes n}$ of $f$ and $g$ is given by
\begin{gather}
  \label{e20}
  f*g=\mu _n(f\otimes g)\Delta _m.
\end{gather}
The operation $*$ is associative and unital with unit $\eta _n\epsilon _m$.
Hence $*$ gives the set $\modC (H^{\otimes m},H^{\otimes n})$ a monoid structure.  Let
$\modC (H^{\otimes m},H^{\otimes n})\cv$ denote this monoid.

\subsection{The operation $\vee$}
\label{sec:operation-ts}

We also need the following variant of convolution.  For
$f\zzzcolon H^{\otimes m}\rightarrow H^{\otimes n}$, $g\zzzcolon H^{\otimes m'}\rightarrow H^{\otimes n}$, $m,m',n\ge 0$, set
\begin{gather*}
  f\vee f' := \mu _n(f\otimes f')\zzzcolon H^{\otimes m+m'}\rightarrow H^{\otimes n}
\end{gather*}
It is easy to see that $\vee$ is associative and unital with unit
$\eta _n\zzzcolon I\rightarrow H^{\otimes n}$.

\begin{lemma}
  \label{r7}
  Suppose that $H$ is commutative.  Then we have
  \begin{gather}
    \label{e12}
    (f\vee f')*(g\vee g')=(f*g)\vee(f'*g')
  \end{gather}
  for $f,g\zzzcolon H^{\otimes m}\rightarrow H^{\otimes n}$ and $f',g'\zzzcolon H^{\otimes m'}\rightarrow H^{\otimes n}$,
  $m,m',n\ge 0$.
\end{lemma}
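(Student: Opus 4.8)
The plan is to expand both sides of \eqref{e12} using the definitions of $*$ and $\vee$, to rewrite the comultiplication on the source as a tensor product of comultiplications, and then to observe that the two sides agree up to a single transposition of two tensor factors on the target, which is exactly where the commutativity \eqref{e8} of $H$ enters. Unravelling the definitions, the left-hand side equals $\mu _n\bigl(\mu _n(f\otimes f')\otimes \mu _n(g\otimes g')\bigr)\Delta _{m+m'}$, while the right-hand side equals $\mu _n\bigl(\mu _n(f\otimes g)\Delta _m\otimes \mu _n(f'\otimes g')\Delta _{m'}\bigr)=\mu _n(\mu _n\otimes \mu _n)(f\otimes g\otimes f'\otimes g')(\Delta _m\otimes \Delta _{m'})$.

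First I would record the decomposition
\[
  \Delta _{m+m'}=(H^{\otimes m}\otimes P_{H^{\otimes m},H^{\otimes m'}}\otimes H^{\otimes m'})(\Delta _m\otimes \Delta _{m'}),
\]
which expresses the comonoid $H^{\otimes m+m'}$ as the tensor product of the comonoids $H^{\otimes m}$ and $H^{\otimes m'}$; this follows by induction on $m'$ from the inductive definition of $\Delta _{m+1}$ together with coassociativity. Substituting this into the left-hand side and writing $\mu _n(f\otimes f')\otimes \mu _n(g\otimes g')=(\mu _n\otimes \mu _n)(f\otimes f'\otimes g\otimes g')$, I would then use naturality of the symmetry $P$ to commute $f',g$ past the middle braiding: naturality gives $(f'\otimes g)P_{H^{\otimes m},H^{\otimes m'}}=P_{H^{\otimes n},H^{\otimes n}}(g\otimes f')$, so that the left-hand side becomes $\mu _n(\mu _n\otimes \mu _n)(H^{\otimes n}\otimes P_{H^{\otimes n},H^{\otimes n}}\otimes H^{\otimes n})(f\otimes g\otimes f'\otimes g')(\Delta _m\otimes \Delta _{m'})$.

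Comparing with the expression for the right-hand side obtained above, the two sides now differ only by the factor $H^{\otimes n}\otimes P_{H^{\otimes n},H^{\otimes n}}\otimes H^{\otimes n}$ inserted between the four outputs and their multiplication. Concretely, the right-hand side forms the fourfold product of the outputs of $f,g,f',g'$ in that order, whereas the left-hand side forms it in the order $f,f',g,g'$; by associativity of $\mu _n$ both are genuine fourfold products, so they coincide once the two middle outputs, those of $g$ and of $f'$, are interchanged. This interchange is licensed precisely because the monoid $(H^{\otimes n},\mu _n,\eta _n)$ is commutative, i.e.\ $\mu _n P_{H^{\otimes n},H^{\otimes n}}=\mu _n$, which is a consequence of the commutativity of $H$; equivalently $\mu _n(\mu _n\otimes \mu _n)(H^{\otimes n}\otimes P_{H^{\otimes n},H^{\otimes n}}\otimes H^{\otimes n})=\mu _n(\mu _n\otimes \mu _n)$ as maps out of $(H^{\otimes n})^{\otimes 4}$.

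The step I expect to be the main obstacle is the careful bookkeeping of the symmetries: one must track how the braiding $P$ appearing in the decomposition of $\Delta _{m+m'}$ interacts, via naturality, with the applications of $f,f',g,g'$, so that every symmetry on the source cancels and only the single transposition of the two middle target factors survives to be absorbed by commutativity. Organizing this through the comonoid tensor decomposition above, rather than by a brute expansion of $\Delta _{m+m'}$ and $\mu _n$, is what keeps the argument clean.
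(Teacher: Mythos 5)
Your proposal is correct and follows essentially the same route as the paper's proof: expand both sides, use the comonoid tensor decomposition $\Delta _{m+m'}=(H^{\otimes m}\otimes P_{H^{\otimes m},H^{\otimes m'}}\otimes H^{\otimes m'})(\Delta _m\otimes \Delta _{m'})$, push the symmetry to the target side by naturality so that it becomes $H^{\otimes n}\otimes P_{H^{\otimes n},H^{\otimes n}}\otimes H^{\otimes n}$ in front of the fourfold multiplication, and absorb it using commutativity of $H^{\otimes n}$. The bookkeeping you flag as the main obstacle is handled in the paper exactly as you describe, via a single application of naturality converting $(f\otimes f'\otimes g\otimes g')$ into $(f\otimes g\otimes f'\otimes g')$.
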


\begin{proof}
  We obtain \eqref{e12} as follows.
  \begin{gather*}
    \begin{split}
      &(f\vee f')*(g\vee g')\\
      =& \mu _n(\mu _n(f\otimes f') \otimes  \mu _n(g\otimes g'))\Delta _{m+m'}\\
      =& \mu _n(\mu _n\otimes \mu _n)(f\otimes f'\otimes g\otimes g')(H^{\otimes m}\otimes P_{H^{\otimes m},H^{\otimes m'}}\otimes H^{\otimes m'})(\Delta _{m}\otimes \Delta _{m'})\\
      =& \mu _n^{[4]}(H^{\otimes n}\otimes P_{H^{\otimes n},H^{\otimes n}}\otimes H^{\otimes n})(f\otimes g\otimes f'\otimes g')(\Delta _{m}\otimes \Delta _{m'}),
    \end{split}
  \end{gather*}
  where for $k\ge 0$, $\mu _n^{[k]}\zzzcolon (H^{\otimes n})^{\otimes k}\rightarrow H^{\otimes n}$ is the
  $k$-fold iterated multiplication for the monoid $H^{\otimes n}$.
  Commutativity of $H$ implies commutativity of $H^{\otimes n}$:
  $\mu _nP_{H^{\otimes n},H^{\otimes n}}=\mu _n$, and hence
  $\mu _n^{[4]}(H^{\otimes n}\otimes P_{H^{\otimes n},H^{\otimes n}}\otimes H^{\otimes n})=\mu _n^{[4]}$.
  Therefore we have
    \begin{gather*}
    \begin{split}
      (f\vee f')*(g\vee g')
      =& \mu _n^{[4]}(f\otimes g\otimes f'\otimes g')(\Delta _{m}\otimes \Delta _{m'})\\
      =& \mu _n(\mu _n\otimes \mu _n)(f\otimes g\otimes f'\otimes g')(\Delta _{m}\otimes \Delta _{m'})\\
      =& \mu _n\bigl((\mu _n(f\otimes g)\Delta _m)\otimes (\mu _n(f'\otimes g')\Delta _m)\bigr)\\
      =& (f* g)\vee (f'*g').
    \end{split}
  \end{gather*}
\end{proof}

By Lemma \ref{r7}, when $H$ is commutative, $\vee$ gives rise to a
monoid homomorphism
\begin{gather}
  \label{e29}
  \vee\zzzcolon \modC (H^{\otimes m},H^{\otimes n})\cv\times \modC (H^{\otimes m'},H^{\otimes n})\cv\rightarrow \modC (H^{\otimes m+m'},H^{\otimes n})\cv.
\end{gather}

\subsection{Permutation morphisms}

For $n\ge 0$, let $\S_n$ denote the symmetric group of order $n$.
Define a homomorphism
\begin{gather*}
  \S_n\rightarrow \modH (n,n),\quad \sigma \mapsto P_\sigma =P^H_\sigma 
\end{gather*}
by
\begin{gather*}
  P_{(i,i+1)}=H^{\otimes i-1}\otimes P_{H,H}\otimes H^{\otimes n-i-1}
\end{gather*}
for $i=1,\ldots ,n-1$.

\subsection{Generalized $\Delta \mu $-relation}

We have the following generalization of \eqref{e11},
\eqref{e18}:
\begin{gather}
  \label{e22}
  \Delta ^{[n]}\mu ^{[m]} = (\mu ^{[m]})^{\otimes n}P_{t_{m,n}}(\Delta ^{[n]})^{\otimes m}
\end{gather}
for $m,n\ge 0$, where $t_{m,n}\in \S_{mn}$ is defined by
\begin{gather}
  \label{e17}
  t_{m,n}((l-1)n+k)=(k-1)m+l
\end{gather}
for $1\le k\le n$, $1\le l\le m$.

\section{The category $\modH $}
\label{sec:category-H}

\subsection{Definition of $\modH $}
Let $\modH $ denote the free symmetric strict monoidal category generated
by a commutative Hopf monoid $\H$.  The objects in $\modH $ are tensor
powers $\H^{\otimes n}$, $n\ge 0$, which are usually identified with $n$.  The
morphisms in $\modH $ are obtained by taking tensor products and
compositions of copies of the morphisms
\begin{gather*}
  P_{m,n}=P_{\H^{\otimes m},\H^{\otimes n}}\zzzcolon m+n\rightarrow n+m\quad (m,n\ge 0),\\
  \mu =\mu _\H\zzzcolon 2\rightarrow 1,\quad
  \eta =\eta _\H\zzzcolon 0\rightarrow 1,\quad
  \Delta =\Delta _\H\zzzcolon 1\rightarrow 2,\quad
  \epsilon =\epsilon _\H\zzzcolon 1\rightarrow 0,\\
  S=S_\H\zzzcolon 1\rightarrow 1,
\end{gather*}
satisfying the axioms of symmetric monoidal category and commutative
Hopf monoid, and satisfying no relations that are not implied by these
axioms.

{\em Freeness} or {\em universality} of $\modH $ is the fact that for any
symmetric strict monoidal category $\modC $ and any commutative Hopf
monoid $A=(A,\mu _A,\eta _A,\Delta _A,\epsilon _A,S_A)$ in $\modC $, there is a unique
strict symmetric monoidal functor $F\zzzcolon \modH \rightarrow \modC $ mapping the Hopf monoid
$\H$ to the Hopf monoid $A$, i.e., $F(\H)=A$ and
$F(\mu _\H)=\mu _A,\ldots,F(S_\H)=S_A$.  In other words, the category $\modH $
is a PROP for commutative Hopf algebras.

\subsection{Factorization of $\modH $}

\begin{lemma}
  \label{factorization}
  Every morphism $f\zzzcolon m\rightarrow n$ in $\modH $ admits factorization
  \begin{gather}
    \label{e16}
    f= \mu ^{[q_1,\ldots,q_n]}P_\sigma (S^{e_1}\otimes \cdots\otimes S^{e_s})\Delta ^{[p_1,\ldots,p_m]},
  \end{gather}
  where $s,p_1,\ldots,p_m,q_1,\ldots,q_n\ge 0$ with
  $s=p_1+\cdots+p_m=q_1+\cdots+q_n$, $e_1,\ldots,e_s\in \{0,1\}$ and
  $\sigma \in \S_s$.
\end{lemma}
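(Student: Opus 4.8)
The plan is to prove the factorization by structural induction on the morphism $f$, using the fact that every morphism in $\modH$ is built from the generators $\mu,\eta,\Delta,\epsilon,S$ and the symmetries $P_{m,n}$ by tensor products and compositions. The target normal form \eqref{e16} reads, from right to left, as: first comultiply (split each of the $m$ inputs into $p_i$ copies), then apply antipodes to the $s$ resulting strands, then permute by $\sigma$, then multiply (merge the strands into the $n$ outputs in blocks of size $q_j$). So the shape I want to establish is a four-layer sandwich $\mu\text{-layer} \circ P_\sigma \circ S\text{-layer} \circ \Delta\text{-layer}$, and the whole argument is a set of rewriting rules showing this shape is closed under the category operations.

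\emph{First} I would check the base cases: each generator is trivially of the form \eqref{e16}. For instance $\Delta\zzzcolon 1\to 2$ is $\mu^{[1,1]}P_{\mathrm{id}}(S^0)\Delta^{[2]}$, $\mu\zzzcolon 2\to 1$ is $\mu^{[2]}P_{\mathrm{id}}(S^0\otimes S^0)\Delta^{[1,1]}$, $\eta$ uses $\Delta^{[0]}=\epsilon$ read appropriately with $s=0$, $\epsilon$ uses $\mu^{[0]}=\eta$, the antipode $S$ takes $s=1,e_1=1$, and the symmetry $P_{m,n}$ is $\mu^{[1,\dots,1]}P_\sigma(S^0\otimes\cdots)\Delta^{[1,\dots,1]}$ with $\sigma$ the block transposition in $\S_{m+n}$. \emph{Next} I would handle the tensor product of two morphisms in normal form: this is essentially bookkeeping, concatenating the parameter lists $(p_i),(q_j)$ and forming the block sum $\sigma\oplus\sigma'\in\S_{s+s'}$, since $\Delta^{[\cdots]}\otimes\Delta^{[\cdots]}$ and $\mu^{[\cdots]}\otimes\mu^{[\cdots]}$ recombine into single indexed (co)multiplications by definition of the notation $\mu^{[n_1,\ldots,n_p]}$.

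\emph{The hard part will be} closure under composition: given $f=\mu^{[\vec q]}P_\sigma(S^{\vec e})\Delta^{[\vec p]}\zzzcolon m\to n$ and $g=\mu^{[\vec q']}P_{\sigma'}(S^{\vec e'})\Delta^{[\vec p']}\zzzcolon n\to r$ in normal form, I must bring $gf$ back to the shape \eqref{e16}. The obstruction is the middle block $\Delta^{[\vec p']}\,\mu^{[\vec q]}$ that appears where the two normal forms meet, together with the two trapped permutation-and-antipode layers. The key tool is the generalized $\Delta\mu$-relation \eqref{e22}, $\Delta^{[a]}\mu^{[b]}=(\mu^{[b]})^{\otimes a}P_{t_{b,a}}(\Delta^{[a]})^{\otimes b}$, applied in each matching slot; this turns the offending $\Delta\mu$ into a $\mu$-layer, a permutation, and a $\Delta$-layer, at the cost of multiplying out the strands. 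I would then push the newly created comultiplications to the right past the antipodes using the Hopf-monoid relation $\Delta S=(S\otimes S)P_{H,H}\Delta$ (equivalently naturality of $P$ and the fact that $S$ is a coalgebra anti-endomorphism, which for a \emph{commutative} Hopf monoid makes $S$ a genuine bialgebra map so $\Delta S=(S\otimes S)\Delta$), collecting all antipodes into a single layer whose exponents add modulo the relation $S^2=1_\H$; push the inner permutation $P_\sigma$ leftward through the comultiplications by naturality of the symmetry; and finally absorb all accumulated permutations into one $\sigma''\in\S_{s''}$, where $s''$ is the common refined strand count. \emph{The remaining steps}, merging adjacent $\mu$-layers and $\Delta$-layers via generalized (co)associativity so that exactly one $\mu^{[\cdots]}$ and one $\Delta^{[\cdots]}$ survive, are then routine applications of the generalized (co)associativity relations. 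Throughout, I would track the single running invariant $s''=\sum p''_i=\sum q''_j$ to confirm the constraint in the statement is preserved, and I expect the commutativity of $\H$ to be exactly what makes the antipode layer collapsible and the permutations freely movable, so that no genuinely new generator is ever needed.
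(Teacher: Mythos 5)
Your proposal is correct and is essentially the paper's own argument: the paper packages the same rewriting rules as the statement that $\modH=\modH^+\modH^0\modH^-$, where $\modH^+$, $\modH^0$, $\modH^-$ are the subcategories generated by $\{\mu,\eta\}$, $\{P_{1,1},S\}$, $\{\Delta,\epsilon\}$ respectively, with closure under composition proved via exactly your three tools (the generalized $\Delta\mu$-relation \eqref{e22}, naturality of the symmetry, and the antipode (anti)compatibilities $S\mu=\mu P_{1,1}(S\otimes S)$, $S\eta=\eta$ and their duals). One small correction: commutativity of $\H$ makes $S$ an algebra map ($S\mu=\mu(S\otimes S)$), not a coalgebra map --- $\Delta S=(S\otimes S)\Delta$ would require cocommutativity --- but this is harmless since the relation $\Delta S=(S\otimes S)P_{\H,\H}\Delta$ you also cite holds in any Hopf monoid and the extra symmetry is absorbed into $P_\sigma$.
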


\begin{proof}
  Let $\modH ^0$ (resp. $\modH ^+$, $\modH ^-$) denote the monoidal subcategory of
  $\modH $ generated by the object $\H=1$ and morphisms $\{P_{1,1},S\}$
  (resp., $\{\mu ,\eta \}$, $\{\Delta ,\epsilon \}$).  We also use the symbols $\modH ^*$
  ($*=0,+,-$) for categories to denote the set of morphisms,
  $\coprod_{m,n\ge 0}\modH ^*(m,n)$.  We can easily verify the following.
\begin{gather}
  \label{e30}
  \modH ^0=\{P_\sigma (S^{e_1}\otimes \cdots\otimes S^{e_s})\zzzvert s\ge 0,\;\sigma \in \S_s,\;(e_1,\ldots,e_s)\in \{0,1\}^s\},\\
  \label{e32}
  \modH ^+=\{\mu ^{[q_1,\ldots,q_n]}\zzzvert n\ge 0,\;q_1,\ldots,q_n\ge 0\},\\
  \label{e34}
  \modH ^-=\{\Delta ^{[p_1,\ldots,p_m]}\zzzvert m\ge 0,\;p_1,\ldots,p_m\ge 0\}.
\end{gather}

We have the following inclusions
\begin{gather}
  \label{e36}
  \modH ^-\modH ^+\subset \modH ^+\modH ^0\modH ^-,\\
  \label{e35}
  \modH ^0\modH ^+\subset \modH ^+\modH ^0,\quad \modH ^-\modH ^0\subset \modH ^0\modH ^-.
\end{gather}
\eqref{e36} follows from the generalized $\Delta \mu $-relation \eqref{e22}.
To prove the first inclusion in \eqref{e35}, it suffices to show that
$\modH ^0g\subset \modH ^+\modH ^0$ for any $g=1_k\otimes f\otimes 1_l$ with $f\in \{\mu ,\eta \}$ and
$k,l\ge 0$, since the category $\modH ^+$ is generated by such $g$'s.  The
case $f=\mu $ (resp. $f=\eta $) can be checked by using naturality of the
symmetry and $S\mu =\mu P_{1,1}(S\otimes S)$ (resp. $S\eta =\eta $).  The second
inclusion in \eqref{e35} can be proved similarly.

Using \eqref{e36} and \eqref{e35}, we see that the set $\modH ^+\modH ^0\modH ^-$
is closed under composition,
i.e. $\modH ^+\modH ^0\modH ^-\modH ^+\modH ^0\modH ^-\subset \modH ^+\modH ^0\modH ^-$.  Since the category
$\modH $ is generated by $\modH ^+\modH ^0\modH ^-$, we have $\modH =\modH ^+\modH ^0\modH ^-$.  Then
the lemma follows from \eqref{e30}--\eqref{e34}.
\end{proof}

\begin{remark}
  \label{r13}
  Lemma \ref{factorization} admits several natural generalizations,
  which can be proved by similar arguments.
  \begin{itemize}
  \item We do not need the freeness of the category $\modH $: Lemma
  \ref{factorization} generalizes to any symmetric monoidal category
  generated by a commutative Hopf monoid possibly with additional
  relations.
  \item One can generalize Lemma \ref{factorization} to a symmetric
  strict monoidal category generated by a Hopf monoid, which is not
  assumed to be commutative.  In this case, $e_1,\ldots,e_s$ should be
  taken as arbitrary nonnegative integers.  If we assume that the
  antipode is invertible, then $e_1,\ldots,e_s$ should be taken as
  arbitrary integers.
\item One can generalize Lemma \ref{factorization} to braided monoidal
  category generated by a Hopf monoid.  In this case the permutation
  morphism $P_\sigma $ should be replaced with a braid.
\end{itemize}
\end{remark}

\subsection{The homomorphism $\alpha _{m,n}\zzzcolon F_n^m\rightarrow \modH (m,n)_{\conv}$}

For $n\ge 0$, define a monoid homomorphism
\begin{gather*}
  \alpha _{1,n}\zzzcolon F_n\rightarrow \modH (1,n)_{\conv}
\end{gather*}
by
\begin{align*}
  \alpha _{1,n}\left(x_j^{(-1)^e}\right) &= \eta _{j-1}\otimes S^e\otimes \eta _{n-j}
\end{align*}
for $j=1,\ldots,n$ and $e=0,1$.  This is well defined since
$\eta _{j-1}\otimes 1_1\otimes \eta _{n-j}$ and $\eta _{j-1}\otimes S\otimes \eta _{n-j}$ are
convolution-inverse to each other.

Now we extend the definition of $\alpha _{1,n}$ as follows.  For $m,n\ge 0$,
define a map
\begin{gather*}
  \alpha _{m,n}\zzzcolon F_n^m\rightarrow \modH (m,n)\cv
\end{gather*}
by
\begin{gather}
  \label{e27}
  \alpha _{m,n}(w_1,\ldots,w_m) = \alpha _{1,n}(w_1)\vee\cdots\vee \alpha _{1,n}(w_m)
\end{gather}
for $w_1,\ldots,w_m\in F_n$, where the $\vee$ are defined in Section
\ref{sec:operation-ts}.  Clearly, this definition of $\alpha _{m,n}$ is
compatible with that of $\alpha _{1,n}$ above.  If $m=0$, then the right
hand side of \eqref{e27} should be understood as the unit in $\vee$,
which is $\eta _n$.

\begin{lemma}
  \label{r2}
  The map $\alpha _{m,n}$ is a monoid homomorphism.
\end{lemma}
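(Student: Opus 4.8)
The goal is to show that $\alpha_{m,n}\zzzcolon F_n^m\rightarrow \modH(m,n)\cv$ is a monoid homomorphism, i.e. that it sends the product in $F_n^m$ (componentwise multiplication of words) to the convolution product in $\modH(m,n)\cv$, and the identity to the unit $\eta_n\epsilon_m$. The key structural tool is the interaction between the two operations $*$ and $\vee$ provided by Lemma \ref{r7}. Since $\H$ is a commutative Hopf monoid in $\modH$, Lemma \ref{r7} applies and gives the exchange law \eqref{e12}.

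The plan is to reduce the general case $m\ge 0$ to the case $m=1$ using the definition \eqref{e27}. First I would handle the base case $m=1$: I must check that $\alpha_{1,n}\zzzcolon F_n\rightarrow \modH(1,n)\cv$ is a monoid homomorphism. It is defined on the generators $x_j^{\pm1}$, and the paper has already noted it is well defined because $\eta_{j-1}\otimes 1_1\otimes\eta_{n-j}$ and $\eta_{j-1}\otimes S\otimes\eta_{n-j}$ are convolution-inverse. Since $F_n$ is free on $x_1,\ldots,x_n$, any assignment of the generators to invertible elements of the convolution monoid $\modH(1,n)\cv$ extends uniquely to a monoid (indeed group) homomorphism out of $F_n$; so $\alpha_{1,n}$ being a homomorphism is essentially automatic from freeness once well-definedness is granted, and I would state this explicitly.

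For the inductive step I would compute, for $(w_1,\ldots,w_m),(w_1',\ldots,w_m')\in F_n^m$,
\begin{gather*}
  \alpha_{m,n}\bigl((w_1,\ldots,w_m)(w_1',\ldots,w_m')\bigr)
  = \alpha_{1,n}(w_1w_1')\vee\cdots\vee\alpha_{1,n}(w_mw_m').
\end{gather*}
Using the $m=1$ case, each factor equals $\alpha_{1,n}(w_i)*\alpha_{1,n}(w_i')$. I would then apply the exchange law \eqref{e12} from Lemma \ref{r7} repeatedly to interchange the two $\vee$'s with the $*$'s, rewriting
\begin{gather*}
  \bigl(\alpha_{1,n}(w_1)*\alpha_{1,n}(w_1')\bigr)\vee\cdots\vee\bigl(\alpha_{1,n}(w_m)*\alpha_{1,n}(w_m')\bigr)
  = \bigl(\alpha_{1,n}(w_1)\vee\cdots\vee\alpha_{1,n}(w_m)\bigr)*\bigl(\alpha_{1,n}(w_1')\vee\cdots\vee\alpha_{1,n}(w_m')\bigr),
\end{gather*}
which is exactly $\alpha_{m,n}(w_1,\ldots,w_m)*\alpha_{m,n}(w_1',\ldots,w_m')$. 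Finally I would verify the unit condition: $\alpha_{m,n}$ sends the identity $(1,\ldots,1)$ of $F_n^m$ to $\eta_n\vee\cdots\vee\eta_n$ ($m$ factors), which by associativity and unitality of $\vee$ equals $\eta_n\epsilon_m$, the unit of $\modH(m,n)\cv$; here I should check that $\alpha_{1,n}(1)=\eta_n\epsilon_1$ is the convolution unit at $m=1$ and that $\vee$-ing these units yields the convolution unit at general $m$.

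The only genuinely delicate point is the bookkeeping in the repeated application of \eqref{e12}. Lemma \ref{r7} is stated for a single pair of $\vee$-factors, so to handle $m$ factors I would either iterate it (formally by a short induction on $m$, combining the first factor against the $\vee$ of the remaining ones and invoking associativity of both $\vee$ and $*$) or simply invoke that \eqref{e29} is a monoid homomorphism, which is precisely the consequence of Lemma \ref{r7} recorded just after its proof. Using \eqref{e29} makes the argument cleanest: the map $\alpha_{m,n}$ is the composite of the diagonal-type assembly of the $\alpha_{1,n}(w_i)$ followed by the homomorphism $\vee$, so monoidality follows by functoriality. I do not expect any obstacle beyond this indexing; commutativity of $\H$ is used exactly once, inside Lemma \ref{r7}, and everything else is formal.
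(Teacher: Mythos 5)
Your proposal is correct and follows essentially the same route as the paper: reduce to the $m=1$ case (where $\alpha_{1,n}$ is a homomorphism by freeness of $F_n$ and invertibility of the images of the generators), then use the exchange law of Lemma \ref{r7} to commute the $\vee$-assembly past the convolution product, and check the unit separately. Your extra remarks about iterating \eqref{e12} over $m$ factors (or invoking \eqref{e29}) just make explicit a step the paper performs silently.
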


\begin{proof}
  We have
  \begin{gather*}
    \begin{split}
      &\alpha _{m,n}(w_1,\ldots,w_m)*\alpha _{m,n}(w'_1,\ldots,w'_m)\\
      =&(\alpha _{1,n}(w_1)\vee\cdots\vee \alpha _{1,n}(w_m))*(\alpha _{1,n}(w'_1)\vee\cdots\vee\alpha _{1,n}(w'_m))\\
      =&(\alpha _{1,n}(w_1)*\alpha _{1,n}(w'_1))\vee\cdots\vee(\alpha _{1,n}(w_m)*\alpha _{1,n}(w'_m))\quad \text{(by Lemma \ref{r7})}\\
      =&\alpha _{1,n}(w_1 w'_1)\vee\cdots\vee \alpha _{1,n}(w_m w'_m)\\
      =&\alpha _{m,n}(w_1 w'_1,\ldots,w_m w'_m)
    \end{split}
  \end{gather*}
  and
  \begin{gather*}
    \alpha _{m,n}(1,\ldots,1)
    =\alpha _{1,n}(1)\vee\cdots\vee \alpha _{1,n}(1)
    =\eta _n\epsilon \vee\cdots\vee\eta _n\epsilon 
    =\eta _n\epsilon _m.
  \end{gather*}
\end{proof}

\subsection{Surjectivity of $\alpha _{m,n}$}

For $i\in \{1,\ldots,m\}$, $j\in \{1,\ldots,n\}$ and $e\in \{0,1\}$, set
\begin{gather*}
  y_{i,j,e}:=(1,\ldots,{x_j^{(-1)^e}},\ldots,1)\in F_n^m
\end{gather*}
with $x_j^{(-1)^e}$ in the $i$th place.  As a monoid, $F_n^m$ is
generated by the $y_{i,j,e}$.  As a group, $F_n^m$ is generated by the
$y_{i,j,0}$.  Set
\begin{gather*}
  \begin{split}
    y^\H_{i,j,e}:=&\alpha _{m,n}(y_{i,j,e})
    =(\eta _{j-1}\otimes 1_1\otimes \eta _{n-j})S^e(\epsilon _{i-1}\otimes 1_1\otimes \epsilon _{m-i}).
  \end{split}
\end{gather*}
Since $y_{i,j,1}=y_{i,j,0}^{-1}$, it follows that $y^\H_{i,j,0}$ and
$y^\H_{i,j,1}$ are convolution-inverse to each other.

\begin{lemma}
  \label{r6}
  The homomorphism $\alpha _{m,n}$ is surjective.  (Thus, $\modH (m,n)\cv$ is a
  group, since it is a homomorphic image of the group $F_n^m$.)
\end{lemma}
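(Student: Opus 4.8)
The plan is to combine the factorization of Lemma~\ref{factorization} with a direct expansion of iterated convolutions of the generators $y^\H_{i,j,e}$. Since $F_n^m$ is generated as a monoid by the $y_{i,j,e}$ (all $e\in\{0,1\}$) and $\alpha_{m,n}$ is a monoid homomorphism by Lemma~\ref{r2}, the image of $\alpha_{m,n}$ is precisely the submonoid of $\modH(m,n)\cv$ generated under the convolution $*$ by the $y^\H_{i,j,e}$. Hence it suffices to show that every morphism $f\zzzcolon m\to n$ in $\modH$ is a convolution product $y^\H_{i_1,j_1,e_1}*\cdots*y^\H_{i_s,j_s,e_s}$ for suitable indices.

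Given $f$, I would first apply Lemma~\ref{factorization} to write $f=\mu^{[q_1,\ldots,q_n]}P_\sigma(S^{e_1}\otimes\cdots\otimes S^{e_s})\Delta^{[p_1,\ldots,p_m]}$, and then read off its \emph{routing data}. The morphism $\Delta^{[p_1,\ldots,p_m]}$ splits the $m$ inputs into $s$ strands, grouped so that strand $l$ originates from a well-defined input $i_l\in\{1,\ldots,m\}$ (with $i_l$ non-decreasing in $l$); strand $l$ carries the sign $e_l$ and is taken by $P_\sigma$ followed by $\mu^{[q_1,\ldots,q_n]}$ into the output slot $j_l\in\{1,\ldots,n\}$, namely the $\mu$-block containing the position $\sigma(l)$. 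I then set $w_i=\prod_{l\,:\,i_l=i}x_{j_l}^{(-1)^{e_l}}\in F_n$, the product taken in increasing order of $l$, and claim that $\alpha_{m,n}(w_1,\ldots,w_m)=f$.

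To prove this claim I would expand the $s$-fold convolution $y^\H_{i_1,j_1,e_1}*\cdots*y^\H_{i_s,j_s,e_s}=\mu_n^{[s]}(y^\H_{i_1,j_1,e_1}\otimes\cdots\otimes y^\H_{i_s,j_s,e_s})\Delta_m^{[s]}$, where $\Delta_m^{[s]}$ is the $s$-fold comultiplication of the comonoid $\H^{\otimes m}$ and $\mu_n^{[s]}$ the $s$-fold multiplication of the monoid $\H^{\otimes n}$. Each factor $y^\H_{i_l,j_l,e_l}$ comultiplies all $m$ inputs, discards every input other than $i_l$ through the counits $\epsilon$, applies $S^{e_l}$, and routes the surviving strand into slot $j_l$ while filling the remaining output slots with units $\eta$; the final $\mu_n^{[s]}$ then recombines the slots. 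Using the counit relations of \eqref{e10} to collapse the composite of the $s$-fold comultiplication with the counits down to $\Delta^{[p_i]}$ on each input $i$, and the unit relations of \eqref{e9} to delete the spurious $\eta$'s, one recovers exactly the comultiplication, sign, and routing parts of $f$; the permutation $\sigma$ is then reproduced up to reordering of the strands that land in a common output slot, and these reorderings are absorbed by the commutativity $\mu_nP_{\H^{\otimes n},\H^{\otimes n}}=\mu_n$ of $\H^{\otimes n}$ used in Lemma~\ref{r7}.

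The main obstacle is this last bookkeeping step: making precise the slogan that a convolution product of the $y^\H_{i,j,e}$ is nothing but ``comultiply each input, route each resulting copy to a chosen output slot, and then multiply together the copies arriving in each slot'', and that this operation realizes an arbitrary factored form \eqref{e16}. It is worth noting that only commutativity of $\H$ is needed and not cocommutativity: the copies of each input are consumed in their natural left-to-right order, so the comultiplicative side matches on the nose, and commutativity enters solely to reconcile the slot-internal order dictated by $\sigma$ with the order in which the convolution factors are multiplied.
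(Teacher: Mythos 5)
Your proposal is correct and follows essentially the same route as the paper: both start from the factorization of Lemma~\ref{factorization} and realize $f$ as a convolution product of the generators $y^\H_{i,j,e}$, using (co)unitality to collapse the spurious $\epsilon$'s and $\eta$'s and commutativity of $\H$ to reconcile the permutation $\sigma$ with the order of the convolution factors. The only difference is organizational: the paper runs an induction on the minimal length $s$ of a factorization, peeling off one generator per step via $f=y^\H_{i,j,e}*f'$, whereas you expand the full $s$-fold convolution at once and exhibit an explicit preimage $(w_1,\ldots,w_m)$; the bookkeeping you flag as the main obstacle is precisely the step the paper also leaves as ``one can check''.
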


\begin{proof}
  Define the {\em size} $s(f)$ of a morphism $f\zzzcolon m\rightarrow n$ in $\modH $ to be
  the least integer $s\ge 0$ such that $f$ is expressed as in
  \eqref{e16}.  We have $s(f)=0$ if and only if $f=\eta _n\epsilon _m$, the
  convolution unit in $\modH (m,n)$.

  To prove $\alpha _{m,n}$ surjective, it suffices to prove the following
  claim: If $s(f)>0$, then there is an $f'\zzzcolon m\rightarrow n$ with $s(f')<s(f)$
  such that
  \begin{gather}
    \label{e21}
    f=y^\H_{i,j,e}*f'
  \end{gather}
  for some $i,j,e$.

  Let us prove this claim.  We assume the situation in Lemma
  \ref{factorization}.  Set
  \begin{gather*}
    i:=\min\{i'\in \{1,\ldots,m\}\zzzvert p_{i'}>0\},
  \end{gather*}
  which is well-defined since $s=s(f)>0$.  Note that
  $p_1=\cdots=p_{i-1}=0$ and $p_i>0$.  Set $e:=e_1$.  Let
  $j\in \{1,\ldots,n\}$ be the unique integer such that
  \begin{gather*}
    q_1+\cdots+q_{j-1}+1\le \sigma (1)\le q_1+\cdots+q_j.
  \end{gather*}
  We have $q_j\ge 1$.  We may assume $\sigma (1)=q_1+\cdots+q_{j-1}+1$, since
  if not we can modify $\sigma $ in \eqref{e16} using commutativity of $\H$
  so that we have $\sigma (1)=q_1+\cdots+q_{j-1}+1$.  Then one can check
  \eqref{e21} with
  \begin{gather*}
    f'=\mu ^{[q_1,\ldots,q_{j-1},q_j-1,q_{j+1},\ldots,q_n]}P_{\sigma '}(S^{e_2}\otimes \cdots\otimes S^{e_s})\Delta ^{[0,\ldots,0,p_i-1,p_{i+1},\ldots,p_m]},
  \end{gather*}
  where
  $\sigma '\in \S_{s-1}$ is the composition of
  \begin{gather*}
    \{1,\ldots,s-1\}\underset{\simeq}{\longrightarrow}
    \{2,\ldots,s\}\overset{\sigma \zzzvert _{\{2,\ldots,s\}}}{\underset{\simeq}{\longrightarrow}}
    \{1,\ldots,s\}\setminus \{\sigma (1)\}\underset{\simeq}{\longrightarrow}
    \{1,\ldots,s-1\},
  \end{gather*}
  where the unnamed arrows are the unique order-preserving bijections.
  This completes the proof.
\end{proof}

\section{The category $\modF $}
\label{sec:category-F}

\subsection{The category $\modF $ of finitely generated free groups}

Let $\modF $ denote the full subcategory of the category of groups, such
that $\Ob(\modF )=\{F_n\zzzvert n\ge 0\}$.  We usually identify the object $F_n$
with the integer $n$.

Define a bijection
\begin{gather*}
  \tau _{m,n}\zzzcolon F_n^m \simeqto \modF (m,n)
\end{gather*}
by
\begin{gather}
  \label{e3}
  \tau _{m,n}(w_1,\ldots,w_m)(x_i)=w_i
\end{gather}
for $w_1,\ldots,w_m\in F_n$ and $i=1,\ldots,m$.  We also use the
notation
\begin{gather*}
  [w_1,\ldots,w_m]_{m,n}=\tau _{m,n}(w_1,\ldots,w_m).
\end{gather*}

The composition rule for this bracket notation is as follows.
\begin{gather}
  \label{e2}
  [w_1,\ldots,w_m]_{m,n}[v_1,\ldots,v_l]_{l,m}
  =[v_1(w_1,\ldots,w_m),\ldots,v_l(w_1,\ldots,w_m)]_{l,n},
\end{gather}
where $v_i(w_1,\ldots,v_m)\in F_n$ is obtained from $v_i$ by
substituting $w_j$ for $x_j$ for $j=1,\ldots,m$.

\subsection{Symmetric monoidal structure of $\modF $}

The category $\modF $ has a monoidal structure with tensor functor $\otimes $
given by the free product, and the monoidal unit given by $0=F_0$.  We
have $m\otimes n=m+n$ for all $m,n\ge 0$.  The tensor product
$f\otimes f'\zzzcolon m+m'\rightarrow n+n'$ of $f\zzzcolon m\rightarrow n$ and $f'\zzzcolon m'\rightarrow n'$ is the unique
homomorphism $f\otimes f'\zzzcolon F_{m+m'}\rightarrow F_{n+n'}$ such that
\begin{gather*}
  (f\otimes f')(x_i)=
  \begin{cases}
    f(x_i)&\text{for $i=1,\ldots,m$},\\
    s_{n}(f'(x_{i-m}))&\text{for $i=m+1,\ldots,m+m'$}.
  \end{cases}
\end{gather*}
Here the homomorphism $s_n\zzzcolon F_{n'}\rightarrow F_{n+n'}$ is defined by
$s_n(x_i)=x_{i+n}$.

In the bracket notation, we have
\begin{gather}
  \label{e1}
  [w_1,\ldots,w_m]_{m,n}\otimes [w'_1,\ldots,w'_{m'}]_{m',n'}
  =[w_1,\ldots,w_m,s_n(w'_1),\ldots,s_n(w'_{m'})]_{m+n,m'+n'}.
\end{gather}

The symmetry in $\modF $ are defined by
\begin{gather*}
  P_{m,n} = [x_{m+1},\ldots,x_{m+n},x_1,\ldots,x_m]_{m+n,n+m}\zzzcolon m+n\rightarrow n+m.
\end{gather*}

\subsection{Commutative Hopf monoid in $\modF $}

The following is well known.

\begin{proposition}
  \label{r44}
  The object $1$ in $\modF $ has a commutative Hopf monoid structure with
  the multiplication, unit, comultiplication, counit and antipode
  defined by
  \begin{gather*}
    \mu _\F=[x_1,x_1]_{2,1},\quad
    \eta _\F=[1]_{0,1},\quad
    \Delta _\F=[x_1x_2]_{1,2},\quad
    \epsilon _\F=[\;]_{1,0},\quad
    S_\F=[x_1^{-1}]_{1,1}.
  \end{gather*}
\end{proposition}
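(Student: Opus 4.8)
The plan is to verify each defining relation of a commutative Hopf monoid---the monoid axioms \eqref{e9}, the comonoid axioms \eqref{e10}, the compatibilities \eqref{e11}, the bialgebra relation \eqref{e18}, the antipode relation \eqref{e19}, and commutativity \eqref{e8}---by direct computation in the bracket notation, using only the composition rule \eqref{e2} and the tensor product rule \eqref{e1}. Since $\tau _{m,n}$ is a bijection and each of $\mu _\F,\eta _\F,\Delta _\F,\epsilon _\F,S_\F$ is manifestly a homomorphism of free groups, the five morphisms are well defined; the whole content of the proposition is thus the verification of the axioms.

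First I would assemble the building blocks that occur repeatedly. From \eqref{e1} one records the tensor products with identities, for instance
\begin{gather*}
  \mu _\F\otimes 1_1=[x_1,x_1,x_2]_{3,2},\quad 1_1\otimes \mu _\F=[x_1,x_2,x_2]_{3,2},\\
  \Delta _\F\otimes 1_1=[x_1x_2,x_3]_{2,3},\quad 1_1\otimes \Delta _\F=[x_1,x_2x_3]_{2,3},
\end{gather*}
together with $1_1=[x_1]_{1,1}$ and $P_{1,1}=[x_2,x_1]_{2,2}$. With these in hand, each axiom collapses to a word substitution via \eqref{e2}. The monoid, comonoid, unit and counit relations are then immediate: both sides of the associativity relation in \eqref{e9} equal $[x_1,x_1,x_1]_{3,1}$, both sides of the coassociativity relation in \eqref{e10} equal $[x_1x_2x_3]_{1,3}$, and the unit/counit relations reduce to $[x_1]_{1,1}=1_1$ because substituting the empty word reduces $x_1x_2$ to a single generator. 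The relations \eqref{e11} are equally short; e.g. $\epsilon _\F\mu _\F=[1,1]_{2,0}=\epsilon _\F\otimes \epsilon _\F$ and $\Delta _\F\eta _\F=[\;]_{0,2}=\eta _\F\otimes \eta _\F$.

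Next I would dispose of commutativity \eqref{e8} and the antipode relation \eqref{e19}. For \eqref{e8}, composing $\mu _\F=[x_1,x_1]_{2,1}$ with $P_{1,1}=[x_2,x_1]_{2,2}$ substitutes $x_1$ for both variables and returns $[x_1,x_1]_{2,1}=\mu _\F$. For \eqref{e19}, one computes $(1_1\otimes S_\F)\Delta _\F=[x_1x_2^{-1}]_{1,2}$, and then $\mu _\F(1_1\otimes S_\F)\Delta _\F=[x_1x_1^{-1}]_{1,1}=[1]_{1,1}$, which is exactly $\eta _\F\epsilon _\F$; the other composite $\mu _\F(S_\F\otimes 1_1)\Delta _\F$ is the same computation with $S_\F$ moved to the left.

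The main obstacle is the bialgebra relation \eqref{e18}, where the bookkeeping of the free-product shifts $s_n$ and of the symmetry $P_{1,1}$ must be handled carefully. The left-hand side is quick: $\Delta _\F\mu _\F=[x_1x_2,x_1x_2]_{2,2}$. For the right-hand side I would work from the inside out. One has $\Delta _\F\otimes \Delta _\F=[x_1x_2,x_3x_4]_{2,4}$; then $1_1\otimes P_{1,1}\otimes 1_1=[x_1,x_3,x_2,x_4]_{4,4}$ transposes the two middle letters, producing $[x_1x_3,x_2x_4]_{2,4}$; and finally composing with $\mu _\F\otimes \mu _\F=[x_1,x_1,x_2,x_2]_{4,2}$ substitutes $x_1,x_1,x_2,x_2$ and yields $[x_1x_2,x_1x_2]_{2,2}$, matching the left-hand side. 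Verifying that the shift indices line up in each $\otimes$ is the only place where genuine care is needed; once the dictionary of the first step is fixed, every axiom is a one-line substitution.
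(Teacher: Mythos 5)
Your proposal is correct and takes essentially the same approach as the paper: the paper's proof also verifies the Hopf monoid axioms by direct substitution in the bracket notation using \eqref{e1} and \eqref{e2}, singling out the bialgebra relation \eqref{e18} as the only computation worth displaying (and computing it the same way, up to the order in which the three factors of the right-hand side are composed). Your individual computations, including $[x_1x_3,x_2x_4]_{2,4}$ as the intermediate stage and $[1]_{1,1}=\eta_\F\epsilon_\F$ for the antipode axiom, all check out.
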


Let $\F=(1,\mu _\F,\eta _\F,\Delta _\F,\epsilon _\F,S_\F)$ denote this Hopf monoid in
$\modF $.  We often omit the subscript $\F$ from the notation.

\begin{proof}
  The proposition can be checked by easy computations.  For
  example, $\Delta \mu =(\mu \otimes \mu )(1\otimes P_{1,1}\otimes 1)(\Delta \otimes \Delta )$ can be checked by
  \begin{gather*}
    \Delta \mu 
    =[x_1x_2]_{1,2}[x_1,x_1]_{2,1}
    =[x_1x_2,x_1x_2]_{2,2},\\
    \begin{split}
      &(\mu \otimes \mu )(1\otimes P_{1,1}\otimes 1)(\Delta \otimes \Delta )
      =[x_1,x_1,x_2,x_2]_{4,2}[x_1,x_3,x_2,x_4]_{4,4}[x_1x_2,x_3x_4]_{2,4}\\
      &\quad \quad =[x_1,x_2,x_1,x_2]_{4,2}[x_1x_2,x_3x_4]_{2,4}
    =[x_1x_2,x_1x_2]_{2,2}.
    \end{split}
  \end{gather*}
   \no{\marginpar{\Tiny Here is a proof.}
   The proof is by the following computations.

   Associativity:
   \begin{gather*}
     \mu (\mu \otimes 1)=[x_1,x_1]_{2,1}[x_1,x_1,x_2]_{3,2}=[x_1,x_1,x_1]_{3,1},\\
     \mu (1\otimes \mu )=[x_1,x_1]_{2,1}[x_1,x_2,x_2]_{3,2}=[x_1,x_1,x_1]_{3,1}.
   \end{gather*}
   Unitality:
   \begin{gather*}
     \mu (\eta \otimes 1)=[x_1,x_1]_{2,1}[x_2]_{1,2}=x_1,\\
     \mu (1\otimes \eta )=[x_1,x_1]_{2,1}[x_1]_{1,2}=x_1.
   \end{gather*}
   Coassociativity:
   \begin{gather*}
     (\Delta \otimes 1)\Delta =[x_1x_2,x_3]_{2,3}[x_1x_2]_{1,2}=[x_1x_2x_3]_{1,3},\\
     (1\otimes \Delta )\Delta =[x_1,x_2x_3]_{2,3}[x_1x_2]_{1,2}=[x_1x_2x_3]_{1,3}.
   \end{gather*}
   Counitality:
   \begin{gather*}
     (\epsilon \otimes 1)\Delta =[1,x_1]_{2,1}[x_1x_2]_{1,2}=1x_1=x_1,\\
     (1\otimes \epsilon )\Delta =[x_1,1]_{2,1}[x_1x_2]_{1,2}=x_11=x_1.
   \end{gather*}
   $\epsilon \eta =1_0$:
   \begin{gather*}
     \epsilon \eta =[1]_{1,0}[\;]_{0,1}=[\;]_{0,0}=1_0.
   \end{gather*}
   $\Delta \eta =\eta \otimes \eta $:
   \begin{gather*}
     \Delta \eta =[x_1x_2]_{1,2}[\;]_{0,1}=[\;]_{0,2}=\eta \otimes \eta .
   \end{gather*}
   $\epsilon \mu =\epsilon \otimes \epsilon $:
   \begin{gather*}
     \epsilon \mu =[1]_{1,0}[x_1,x_1]_{2,1}=[1,1]_{2,0}=\epsilon \otimes \epsilon .
   \end{gather*}
   $\Delta \mu =(\mu \otimes \mu )(1\otimes P_{1,1}\otimes 1)(\Delta \otimes \Delta )$:
   \begin{gather*}
     \Delta \mu 
     =[x_1x_2]_{1,2}[x_1,x_1]_{2,1}
     =[x_1x_2,x_1x_2]_{2,2},\\
     \begin{split}
     &(\mu \otimes \mu )(1\otimes P_{1,1}\otimes 1)(\Delta \otimes \Delta )
       =[x_1,x_1,x_2,x_2]_{4,2}[x_1,x_3,x_2,x_4]_{4,4}[x_1x_2,x_3x_4]_{2,4}\\
       &\quad \quad =[x_1,x_2,x_1,x_2]_{4,2}[x_1x_2,x_3x_4]_{2,4}
     =[x_1x_2,x_1x_2]_{2,2}.
     \end{split}
   \end{gather*}
   Antipode:
   \begin{gather*}
     \mu (1\otimes S)\Delta 
     =[x_1,x_1]_{2,1}[x_1,x_2^{-1}][x_1x_2]_{1,2}
     =[x_1,x_1]_{2,1}[x_1x_2^{-1}]_{1,2}
     =[x_1x_1^{-1}]_{1,2}
     =[1]_{1,2},\\
     \mu (S\otimes 1)\Delta 
     =[x_1,x_1]_{2,1}[x_1^{-1},x_2][x_1x_2]_{1,2}
     =[x_1,x_1]_{2,1}[x_1^{-1}x_2]_{1,2}
     =[x_1^{-1}x_1]_{1,2}
     =[1]_{1,2},\\
     \eta \epsilon =[\;]_{0,1}[1]_{1,0}=[1]_{1,2}.
   \end{gather*}
   Commutativity:
   \begin{gather*}
     \mu P_{1,1}=[x_1,x_1]_{2,1}[x_2,x_1]_{2,2}=[x_1,x_1]_{2,1}=\mu .
   \end{gather*}
   }
\end{proof}

By Proposition \ref{r44} and universality of $\modH $, there is a unique
symmetric monoidal functor
\begin{gather}
  T\zzzcolon \modH \longrightarrow\modF ,
\end{gather}
which maps the Hopf monoid $\H$ in $\modH $ to the Hopf monoid $\F$ in $\modF $.

\subsection{The group $\modF (m,n)\cv$}

\begin{proposition}
  \label{r49}
  For $m,n\ge 0$, the bijection $\tau _{m,n}$ gives rise to a group
  isomorphism
  \begin{gather*}
    \tau _{m,n}\zzzcolon F_n^m\simeqto \modF (m,n)\cv.
  \end{gather*}
\end{proposition}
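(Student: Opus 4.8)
The plan is to upgrade the bijection $\tau _{m,n}$ to a monoid homomorphism from the direct product group $F_n^m$ to the convolution monoid $\modF (m,n)\cv$. Since $\tau _{m,n}$ is already a bijection and a bijective monoid homomorphism is a monoid isomorphism, it will then follow that $\modF (m,n)\cv$ is a group (being isomorphic to $F_n^m$) and that $\tau _{m,n}$ is a group isomorphism, completing the proof. Concretely, writing $f=[w_1,\ldots,w_m]_{m,n}$ and $g=[w_1',\ldots,w_m']_{m,n}$, I must establish
\[
  f*g=[w_1w_1',\ldots,w_mw_m']_{m,n}
\]
together with the fact that the convolution unit $\eta _n\epsilon _m$ equals $[1,\ldots,1]_{m,n}=\tau _{m,n}(1,\ldots,1)$. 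The latter is immediate, since $\eta _n\epsilon _m\zzzcolon m\to n$ is the homomorphism sending every generator to $1$.

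The computational heart of the argument is an explicit description, in the bracket notation, of the structure maps $\mu _n\zzzcolon 2n\to n$ and $\Delta _m\zzzcolon m\to 2m$ of the Hopf monoid $\F ^{\otimes \bullet }$ in $\modF $. I would prove, by induction on $n$ and on $m$ using the inductive definitions in Section~\ref{sec:hopf-mono-symm}, the explicit symmetries $P_{k,l}$ of $\modF $, and the rules \eqref{e1} and \eqref{e2}, that
\[
  \mu _n=[x_1,\ldots,x_n,x_1,\ldots,x_n]_{2n,n},\qquad
  \Delta _m=[x_1x_{m+1},\ldots,x_mx_{2m}]_{m,2m}.
\]
In words, $\mu _n$ is the fold map with $\mu _n(x_j)=\mu _n(x_{n+j})=x_j$ for $1\le j\le n$, and $\Delta _m$ sends $x_i\mapsto x_ix_{m+i}$. (Conceptually these are just the componentwise multiplication and the diagonal-type comultiplication on the $m$-fold tensor-power Hopf monoid, but the cleanest self-contained verification is the induction.)

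With these formulas in hand, the convolution $f*g=\mu _n(f\otimes g)\Delta _m$ of \eqref{e20} can be computed on each generator $x_i$ ($1\le i\le m$). By \eqref{e1} we have $f\otimes g=[w_1,\ldots,w_m,s_n(w_1'),\ldots,s_n(w_m')]_{2m,2n}$, whence, using \eqref{e2},
\[
  (f*g)(x_i)=\mu _n\bigl((f\otimes g)(x_ix_{m+i})\bigr)
  =\mu _n\bigl(w_i\,s_n(w_i')\bigr)=\mu _n(w_i)\,\mu _n(s_n(w_i'))=w_iw_i',
\]
because $\mu _n$ fixes $x_1,\ldots,x_n$ and sends $x_{n+j}\mapsto x_j$, so it precisely undoes the shift $s_n$. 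Thus $f*g=[w_1w_1',\ldots,w_mw_m']_{m,n}=\tau _{m,n}\bigl((w_1,\ldots,w_m)(w_1',\ldots,w_m')\bigr)$, which shows $\tau _{m,n}$ is a homomorphism and finishes the argument.

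The main obstacle is the inductive identification of $\mu _n$ and $\Delta _m$: the inductive clauses involve the braiding morphisms $P_{H,H^{\otimes m}}$ and $P_{H^{\otimes m},H}$, so the induction requires carefully tracking how these permutations interleave the two tensor blocks before composing via \eqref{e2}. Once the closed forms are secured, the convolution computation and the passage from homomorphism to isomorphism are routine.
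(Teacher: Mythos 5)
Your proposal is correct and follows essentially the same route as the paper: both reduce the statement to the identity $[w_1,\ldots,w_m]*[w'_1,\ldots,w'_m]=[w_1w'_1,\ldots,w_mw'_m]$ and verify it using the closed forms $\mu_n=[x_1,\ldots,x_n,x_1,\ldots,x_n]_{2n,n}$ and $\Delta_m=[x_1x_{m+1},\ldots,x_mx_{2m}]_{m,2m}$. The only difference is that you propose to justify these closed forms by an explicit induction on the defining recursions, whereas the paper simply uses them; your generator-by-generator evaluation of $f*g$ is equivalent to the paper's bracket-composition computation.
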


\begin{proof}
It suffices to see that $\tau _{m,n}$ is a monoid homomorphism, i.e., we
have
\begin{gather}
  \label{e4}
	[w_1,\ldots,w_m]_{m,n}*[w'_1,\ldots,w'_m]_{m,n}
	=
	[w_1w'_1,\ldots,w_m w'_m]_{m,n}.
\end{gather}
  for $w_1,\ldots,w_m,w'_1,\ldots,w'_m\in F_n$, and
  \begin{gather}
    \label{e5}
    \eta _n\epsilon _m=[1,\ldots,1]_{m,n}.
  \end{gather}
  \eqref{e4} can be checked as follows:
  \begin{gather*}
    \begin{split}
      &[w_1,\ldots,w_m]_{m,n}*[w'_1,\ldots,w'_m]_{m,n}\\
      =&\mu _n([w_1,\ldots,w_m]_{m,n}\otimes [w'_1,\ldots,w'_m]_{m,n})\Delta _n\\
      =&[x_1,\ldots,x_n,x_1,\ldots,x_n]_{2n,n}
      [w_1,\ldots,w_m,s_n(w'_1),\ldots,s_n(w'_m)]_{2m,2n}\\
      &\quad
      \times [x_1x_{m+1},\ldots,x_m x_{2m}]_{m,2m}\\
      =&[w_1,\ldots,w_m,w'_1,\ldots,w'_m]_{2m,n}[x_1x_{m+1},\ldots,x_m x_{2m}]_{m,2m}\\
      =&[w_1w'_1,\ldots,w_m w'_m]_{m,n}.
    \end{split}
  \end{gather*}
  \eqref{e5} can be checked easily.
\end{proof}

\section{Isomorphism between $\modH $ and $\modF $}
\label{sec:isomorphism-between-}

\begin{theorem}
  \label{H=F}
  The functor $T\zzzcolon \modH \longrightarrow\modF $ is an isomorphism of symmetric
  strict monoidal categories.
\end{theorem}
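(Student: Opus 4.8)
The plan is to prove that $T$ is a bijection on objects and on every morphism set; for a strict symmetric monoidal functor this suffices, since the set-theoretic inverse of such a $T$ is automatically strict symmetric monoidal, giving an isomorphism of symmetric strict monoidal categories. On objects the claim is immediate: both categories have object set $\{n\mid n\ge 0\}$, and as $T$ is strict monoidal with $T(\H)=\F$ we get $T(n)=T(\H^{\otimes n})=\F^{\otimes n}=n$. Everything therefore reduces to showing that
\begin{gather*}
  T\zzzcolon \modH (m,n)\longrightarrow \modF (m,n)
\end{gather*}
is bijective for all $m,n\ge 0$.

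The central step is to establish the commutative triangle relating $\alpha _{m,n}$, $\tau _{m,n}$ and $T$, namely the identity
\begin{gather*}
  T\circ \alpha _{m,n} = \tau _{m,n}\zzzcolon F_n^m\longrightarrow \modF (m,n)\cv .
\end{gather*}
Both sides are monoid homomorphisms out of the group $F_n^m$: the left-hand side because $\alpha _{m,n}$ is a homomorphism (Lemma \ref{r2}) and because $T$ carries convolution to convolution, being symmetric monoidal and sending $\mu ,\eta ,\Delta ,\epsilon $ of $\H$ to those of $\F$, so that $T(f*g)=Tf*Tg$; the right-hand side by Proposition \ref{r49}. Hence it is enough to verify the identity on the group generators $y_{i,j,0}=(1,\ldots ,x_j,\ldots ,1)$ of $F_n^m$, with $x_j$ in the $i$th place.

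For this I would compute $T(y^\H_{i,j,0})$ directly. Applying $T$ to $y^\H_{i,j,0}=(\eta _{j-1}\otimes 1_1\otimes \eta _{n-j})(\epsilon _{i-1}\otimes 1_1\otimes \epsilon _{m-i})$ and using $T(\eta _\H)=\eta _\F=[1]_{0,1}$ and $T(\epsilon _\H)=\epsilon _\F=[\;]_{1,0}$, the right factor becomes the homomorphism $[1,\ldots ,x_1,\ldots ,1]_{m,1}\zzzcolon m\to 1$ with $x_1$ in the $i$th slot (all other generators being killed by the counits), while the left factor becomes $[x_j]_{1,n}\zzzcolon 1\to n$ (the surviving generator placed in the $j$th slot by the units). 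Composing via the substitution rule \eqref{e2} yields $[1,\ldots ,x_j,\ldots ,1]_{m,n}$ with $x_j$ in the $i$th position, which is exactly $\tau _{m,n}(y_{i,j,0})$. This verifies the commutativity, and it is the only genuinely computational point in the argument.

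Finally I would harvest the conclusion formally. Since $\tau _{m,n}=T\circ \alpha _{m,n}$ is injective (indeed bijective, by Proposition \ref{r49}), the factor $\alpha _{m,n}$ is injective; together with its surjectivity (Lemma \ref{r6}) this shows $\alpha _{m,n}$ is a bijection. Consequently $T=\tau _{m,n}\circ \alpha _{m,n}^{-1}$ on $\modH (m,n)$ is a composite of bijections, hence bijective. As this holds for all $m,n\ge 0$ and $T$ is already bijective on objects, $T$ is an isomorphism of symmetric strict monoidal categories. The main obstacle is the generator computation of the second and third paragraphs; the surrounding reductions are pure diagram-chasing built on the already-established facts that $\alpha _{m,n}$ is a surjective homomorphism and $\tau _{m,n}$ is an isomorphism.
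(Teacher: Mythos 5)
Your proposal is correct and follows essentially the same route as the paper: reduce to bijectivity of $T$ on Hom sets, establish the commutative triangle $T\circ\alpha_{m,n}=\tau_{m,n}$ by checking it on the generators $y_{i,j,0}$ via the same counit/unit computation, and then conclude from the surjectivity of $\alpha_{m,n}$ and the bijectivity of $\tau_{m,n}$. The only (immaterial) difference is that you phrase the final step as ``$\alpha_{m,n}$ is injective, hence bijective, hence $T=\tau_{m,n}\circ\alpha_{m,n}^{-1}$,'' whereas the paper reads the same conclusion directly off the commutative diagram.
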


As mentioned in the introduction, the above result is well known
\cite{Pirashvili}.  The proof below is a direct, combinatorial proof,
not relying on Lawvere theories.

\begin{proof}
  It suffices to prove that $T_{m,n}\zzzcolon \modH (m,n)\rightarrow \modF (m,n)$ is bijective
  for $m,n\ge 0$.

  Consider the following diagram of group homomorphisms
  \begin{gather}
    \label{e24}
    \vcenter{\xymatrix{
      \modH (m,n)\cv
      \ar[rr]^{T_{m,n}}
      &
      &
      \modF (m,n)\cv
      \\
      &
      F_n^m
      \ar@{->>}[lu]^{\alpha _{m,n}}
      \ar[ru]_{\tau _{m,n}}^{\simeq}
      &
    }}
  \end{gather}
  Here, $T_{m,n}$ is a homomorphism since $T$ is a monoidal functor.
  Since $\tau _{m,n}$ is an isomorphism (Proposition \ref{r49}) and
  $\alpha _{m,n}$ is surjective (Lemma \ref{r6}), it suffices to prove that
  the diagram commutes.

  Since the group $F_n^m$ is generated by $y_{i,j,0}$ for
  $i\in \{1,\ldots,m\}$ and $j\in \{1,\ldots,n\}$, we have only to check
  \begin{gather}
    \label{e14}
    T_{m,n}\alpha _{m,n}(y_{i,j,0})= \tau _{m,n}(y_{i,j,0}).
  \end{gather}
  Indeed,
  \begin{gather*}
    \begin{split}
      T_{m,n}\alpha _{m,n}(y_{i,j,0})
      =&T_{m,n}(y^\H_{i,j,0})\\
      =&T_{m,n}\left((\eta _\H^{\otimes j-1}\otimes 1_1\otimes \eta _\H^{\otimes n-j})(\epsilon _\H^{\otimes i-1}\otimes 1_1\otimes \epsilon _\H^{\otimes m-i})\right)\\
      =&(\eta _\F^{\otimes j-1}\otimes 1_1\otimes \eta _\F^{\otimes n-j})(\epsilon _\F^{\otimes i-1}\otimes 1_1\otimes \epsilon _\F^{\otimes m-i})\\
      =&[x_j]_{1,n}[\overset{1}1,\ldots,\overset{i}{x_1\negthinspace\negthinspace\phantom{1}},\ldots,\overset{m}1]_{m,1}\\
      =&[\overset{1}1,\ldots,\overset{i}{x_j\negthinspace\negthinspace\phantom{1}},\ldots,\overset{m}1]_{m,n}\\
      =&y_{i,j,0}.
    \end{split}
  \end{gather*}
\end{proof}

\section{Coproducts in $\modH $}
\label{sec:coproducts-}

As is well known, $\modF $ admits finite coproducts given by free
products.  Since $\modH \simeq\modF $, the category $\modH $ admits finite
coproducts.  Here we provide a direct proof of this fact, without
relying on the corresponding result for $\modF $ in Sections
\ref{sec:category-F} and \ref{sec:isomorphism-between-}.

\begin{lemma}
  \label{r9}
  For $m,m',n\ge 0$, there are group homomorphisms
  \begin{gather*}
    \pi =\pi _{m,m',n}\zzzcolon \modH (m+m',n)\rightarrow \modH (m,n),\quad \pi (f)=f(1_m\otimes \eta _{m'}),\\
    \pi '=\pi '_{m,m',n}\zzzcolon \modH (m+m',n)\rightarrow \modH (m',n),\quad \pi (f)=f(\eta _m\otimes 1_{m'}).
  \end{gather*}
\end{lemma}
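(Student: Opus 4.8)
The plan is to deduce both maps at once from a single observation: precomposition by a \emph{comonoid morphism} induces a homomorphism of convolution monoids. I would first note that, by Lemma \ref{r6}, each $\modH (m,n)\cv$ is a group; hence it is enough to check that $\pi $ and $\pi '$ preserve convolution, since a multiplication-preserving map between groups is automatically a group homomorphism.

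The heart of the argument is the following claim. Suppose $\phi \zzzcolon k\rightarrow l$ is a comonoid morphism, i.e.\ it satisfies $\Delta _l\phi =(\phi \otimes \phi )\Delta _k$ and $\epsilon _l\phi =\epsilon _k$. Then for $f,g\zzzcolon l\rightarrow n$ I would compute, using the definition \eqref{e20} of convolution together with the interchange law for $\otimes $,
\begin{gather*}
  (f*g)\phi =\mu _n(f\otimes g)\Delta _l\phi =\mu _n(f\otimes g)(\phi \otimes \phi )\Delta _k=\mu _n\bigl((f\phi )\otimes (g\phi )\bigr)\Delta _k=(f\phi )*(g\phi ),
\end{gather*}
while the convolution unit $\eta _n\epsilon _l$ is carried to $\eta _n\epsilon _l\phi =\eta _n\epsilon _k$, the convolution unit of $\modH (k,n)\cv$. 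Hence $f\mapsto f\phi $ is a monoid homomorphism $\modH (l,n)\cv\rightarrow \modH (k,n)\cv$. Taking $\phi =1_m\otimes \eta _{m'}\zzzcolon m\rightarrow m+m'$ then yields $\pi $, and $\phi =\eta _m\otimes 1_{m'}\zzzcolon m'\rightarrow m+m'$ yields $\pi '$.

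What remains is to verify that $1_m\otimes \eta _{m'}$ and $\eta _m\otimes 1_{m'}$ are indeed comonoid morphisms. Here I would use that the unit $\eta \zzzcolon I\rightarrow \H$ is a comonoid morphism, which is precisely the content of the axioms $\Delta \eta =\eta \otimes \eta $ and $\epsilon \eta =1_I$ in \eqref{e11}; that identities are trivially comonoid morphisms; and that a tensor product of comonoid morphisms is again a comonoid morphism for the tensor-product comonoid structures. Granting that $(\Delta _{m+m'},\epsilon _{m+m'})$ is the tensor-product comonoid structure assembled from $(\Delta _m,\epsilon _m)$ and $(\Delta _{m'},\epsilon _{m'})$, it follows that $\eta _{m'}=\eta ^{\otimes m'}$ and $\eta _m=\eta ^{\otimes m}$ are comonoid morphisms, whence so are $1_m\otimes \eta _{m'}$ and $\eta _m\otimes 1_{m'}$. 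The one point that requires genuine care, and which I expect to be the main obstacle, is exactly this last identification: one must check, from the inductive definitions of $\Delta _m,\Delta _{m'}$ and the naturality of the symmetry $P$, that the shuffled comonoid structure $\Delta _{m+m'}$ splits compatibly along $m+m'=m\otimes m'$. This is essentially the assertion, already recorded in Section \ref{sec:hopf-mono-symm}, that $\H^{\otimes m}$ carries the tensor-power Hopf monoid structure; once that coherence is invoked, the formal computation above completes the proof.
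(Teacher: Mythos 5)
Your proposal is correct and is essentially the paper's own argument in slightly more abstract clothing: the paper proves the same identity $(f*g)\phi=(f\phi)*(g\phi)$ by an inline computation for $\phi=1_m\otimes \eta _{m'}$, using exactly the two facts you isolate, namely the splitting $\Delta _{m+m'}=(1_m\otimes P_{m,m'}\otimes 1_{m'})(\Delta _m\otimes \Delta _{m'})$ and $\Delta _{m'}\eta _{m'}=\eta _{m'}\otimes \eta _{m'}$ together with naturality of the symmetry. Packaging this as ``precomposition by a comonoid morphism is a homomorphism of convolution monoids'' is a clean reorganization, but the computational content, including the one coherence point you flag, is the same as in the paper.
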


\begin{proof}
  Clearly, we have $\pi (\eta _n\epsilon _{m+m'})=\eta _n\epsilon _m$.  For
  $f,g\in \modH (m+m',n)$, we have
  \begin{gather*}
    \begin{split}
      \pi (f*g)
      &=\mu _n(f\otimes g)\Delta _{m+m'}(1_m\otimes \eta _{m'})\\
      &=\mu _n(f\otimes g)(1_m\otimes P_{m,m'}\otimes 1_{m'})(\Delta _m\otimes \Delta _{m'})(1_m\otimes \eta _{m'})\\
      &=\mu _n(f\otimes g)(1_m\otimes P_{m,m'}\otimes 1_{m'})(\Delta _m\otimes \eta _{m'}\otimes \eta _{m'})\\
      &=\mu _n(f\otimes g)(1_m\otimes \eta _{m'}\otimes 1_m\otimes \eta _{m'})\Delta _m\\
      &=\mu _n(f(1_m\otimes \eta _{m'})\otimes g(1_m\otimes \eta _{m'}))\\
      &=\pi (f)*\pi (g).
    \end{split}
  \end{gather*}
  Hence $\pi $ is a homomorphism.  Similarly, we can prove $\pi '$ is a
  homomorphism.
\end{proof}

\begin{proposition}[well known]
  \label{r10}
 The category $\modH $ admits finite coproducts.  More precisely, the
 object $0$ is the initial object, and for $m,m'\ge 0$, $m+m'$ is a
 coproduct of $m$ and $m'$, with a coproduct diagram
  \begin{gather}
    \label{e25}
    \vcenter{\xymatrix@!C{
    m\ar[r]^(.4){1_m\otimes \eta _{m'}}
    &
    m+m'
    &
    m'.\ar[l]_(.4){\eta _m\otimes 1_{m'}}
  }}
  \end{gather}
\end{proposition}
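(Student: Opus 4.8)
The plan is to show that, for every object $n$, the restriction map on Hom-sets induced by the two injections is a bijection; this is exactly the universal property of the coproduct. The group (convolution) structure on the Hom-sets is auxiliary but makes the bookkeeping clean.

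First I would dispose of the nullary case, i.e.\ that $0$ is initial. For any $n\ge 0$ the homomorphism $\alpha_{0,n}\zzzcolon F_n^0\to\modH(0,n)\cv$ of Lemma \ref{r6} is surjective, and $F_n^0$ is the trivial group, so $\modH(0,n)\cv$ is trivial. Hence $\modH(0,n)$ is a singleton, its unique element being the convolution unit $\eta_n$. Thus there is exactly one morphism $0\to n$, so $0$ is initial, which is the empty coproduct.

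For the binary case, fix $m,m',n\ge 0$ and write $\iota_1=1_m\otimes\eta_{m'}\zzzcolon m\to m+m'$ and $\iota_2=\eta_m\otimes 1_{m'}\zzzcolon m'\to m+m'$. Diagram \eqref{e25} is a coproduct diagram precisely when the restriction map
\[
\Phi\zzzcolon \modH(m+m',n)\to\modH(m,n)\times\modH(m',n),\qquad \Phi(h)=(h\iota_1,\,h\iota_2),
\]
is a bijection for every $n$, since surjectivity of $\Phi$ gives existence and injectivity gives uniqueness of the induced morphism. By Lemma \ref{r9}, $\Phi=(\pi,\pi')$ is a homomorphism of convolution groups. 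I would then introduce the inverse candidate
\[
\Psi\zzzcolon \modH(m,n)\times\modH(m',n)\to\modH(m+m',n),\qquad \Psi(f,g)=f\vee g,
\]
which is a group homomorphism by \eqref{e29}, and verify two things: (i) $\Phi\Psi=\mathrm{id}$, and (ii) $\Psi$ is surjective. For (i) I would compute $(f\vee g)\iota_1=\mu_n(f\otimes g)(1_m\otimes\eta_{m'})=\mu_n(f\otimes(g\eta_{m'}))$; since $0$ is initial, $g\eta_{m'}=\eta_n$, and unitality of the monoid $H^{\otimes n}$ gives $\mu_n(f\otimes\eta_n)=f$, and symmetrically $(f\vee g)\iota_2=g$. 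For (ii), by Lemma \ref{r6} every $h\in\modH(m+m',n)$ equals $\alpha_{m+m',n}(w)$ for some $w=(w_1,\ldots,w_{m+m'})\in F_n^{m+m'}$; splitting the iterated $\vee$ in the definition \eqref{e27} of $\alpha_{m+m',n}(w)$ after the $m$th factor and using associativity of $\vee$ gives
\[
\alpha_{m+m',n}(w)=\Psi\bigl(\alpha_{m,n}(w_1,\ldots,w_m),\,\alpha_{m',n}(w_{m+1},\ldots,w_{m+m'})\bigr),
\]
so $h$ lies in the image of $\Psi$.

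Combining (i) and (ii) finishes the argument formally: from $\Phi\Psi=\mathrm{id}$ the map $\Psi$ is injective, and together with (ii) it is bijective, whence $\Phi=\Psi^{-1}$ is a bijection. This yields the universal property of $m+m'$ with injections $\iota_1,\iota_2$, and together with the initial object $0$ it gives all finite coproducts. I expect step (ii) to be the main obstacle, or rather the key idea: the naive route would be to prove the reconstruction identity $h=(h\iota_1)\vee(h\iota_2)$ directly, but this is awkward because $h$ is not a monoid morphism and $\mu_n(h\otimes h)\neq h\,\mu_{m+m'}$ in general. The surjectivity of $\alpha$ together with associativity of $\vee$ lets me place every $h$ in the image of $\Psi$ instead, after which bijectivity of $\Phi$ is automatic.
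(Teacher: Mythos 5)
Your proof is correct and rests on the same core decomposition as the paper's: both reduce the universal property to the map $(\pi ,\pi ')$ of Lemma \ref{r9} being a bijection for every $n$, and both exhibit $\vee$ from \eqref{e29} as its inverse, relying on the convolution group structure and on the surjectivity of $\alpha $ (Lemma \ref{r6}). The difference is in how mutual inverseness is verified. The paper checks both composites on convolution-group generators: $\vee(\pi ,\pi ')(y^\H_{i,j,0})=y^\H_{i,j,0}$ for the generators of $\modH (m+m',n)\cv$, and $(\pi ,\pi ')\vee(f,f')=(f,f')$ for the generators of the product group. You instead prove $(\pi ,\pi ')\circ \vee=\mathrm{id}$ for \emph{all} pairs $(f,g)$ by the clean categorical computation $(f\vee g)(1_m\otimes \eta _{m'})=\mu _n(f\otimes (g\eta _{m'}))=\mu _n(f\otimes \eta _n)=f$ (using initiality of $0$ and unitality of the monoid $\H^{\otimes n}$), and you replace the remaining generator check by surjectivity of $\vee$, obtained by splitting the iterated $\vee$ in \eqref{e27} after the $m$th factor and invoking Lemma \ref{r6}. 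Both verifications are sound; yours trades the generator bookkeeping in one direction for the explicit splitting of $\alpha _{m+m',n}$, which is a nice observation, and your derivation of initiality of $0$ from the surjectivity of $\alpha _{0,n}$ (rather than from Lemma \ref{factorization}, as the paper does) is equally valid.
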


\no{
\begin{remark}
  It is easy to see that $0$ is terminal, too.  However, $\modH $ does not
  admit products.
\end{remark}
}

\begin{proof}
  That $0$ is initial in $\modH $ follows easily from Lemma
  \ref{factorization}.

  To prove that \eqref{e25} is a coproduct diagram, it suffices to
  prove that for every $n\ge 0$ the homomorphism
  \begin{gather}
    \label{e31}
    \vee\zzzcolon \modH (m,n)\cv\times \modH (m',n)\cv\rightarrow \modH (m+m',n)\cv
  \end{gather}
  from \eqref{e29} and the homomorphism
  \begin{gather*}
    (\pi ,\pi ')\zzzcolon \modH (m+m',n)\cv\rightarrow \modH (m,n)\cv\times \modH (m',n)\cv
  \end{gather*}
  are inverse to each other.  We check this claim on the generators.

  First, we check $\vee(\pi ,\pi ')(y^\H_{i,j,0})=y^\H_{i,j,0}$ for
  $i\in \{1,\ldots,m+m'\}$, $j\in \{1,\ldots,n\}$.  If $1\le i\le m$, then
  \begin{gather*}
      \vee(\pi ,\pi ')(y^\H_{i,j,0})
      =\vee(y^\H_{i,j,0},\eta _n\epsilon _{m'})
      =y^\H_{i,j,0},
  \end{gather*}
  and if $m+1\le i\le m+n$, then
  \begin{gather*}
    \vee(\pi ,\pi ')(y^\H_{i,j,0})
    =\vee(\eta _n\epsilon _{m},y^\H_{i-m,j,0})
    =y^\H_{i,j,0}.
  \end{gather*}

  Conversely, let us check $(\pi ,\pi ')\vee(f,f')=(f,f')$ for the
  generators of $\modH (m,n)\cv\times \modH (m',n)\cv$:
  $(f,f')=(y^\H_{i,j,0},\eta _n\epsilon _{m'})$ with $i\in \{1,\ldots,m\}$,
  $j\in \{1,\ldots,n\}$ and $(f,f')=(\eta _n\epsilon _{m},y^\H_{i,j,0})$ for
  $i\in \{1,\ldots,m'\}$, $j\in \{1,\ldots,n\}$.  In the former case,
  \begin{gather*}
    (\pi ,\pi ')\vee(y^\H_{i,j,0},\eta _n\epsilon _{m'})
    =(\pi ,\pi ')(y^\H_{i,j,0})
    =(\pi (y^\H_{i,j,0}),\pi '(y^\H_{i,j,0}))
    =(y^\H_{i,j,0},\eta _n\epsilon _{m'}).
  \end{gather*}
  The latter case can be checked similarly.
\end{proof}

By the proof of Proposition \ref{r10}, it follows that we have a group
isomorphism
\begin{gather*}
  \modH (m,n)\cv\simeq \modH (1,n)\cv^m.
\end{gather*}

\section{Motivations from topology}
\label{sec:motiv-from-topol}

\subsection{Category of bouquets of circles}

Here we recall well-known facts about the cogroup structure on $S^1$
and the Hopf monoid $\F=\modZ $ in $\modF $.

Let $\Top_*/h$ denote the category of pointed topological spaces and
homotopy classes of pointed continuous maps.  Let $\modB $ denote the full
subcategory of $\Top_*/h$ with $\Ob(\modB )=\{\bigvee^nS^1\zzzvert n\ge 0\}$.  The
fundamental group gives an isomorphism of symmetric monoidal
categories
\begin{gather*}
  \pi _1\zzzcolon \modB \overset{\simeq}{\longrightarrow}\modF 
\end{gather*}
in a natural way.

It is well known that the circle $S^1$ has a structure of a {\em
cogroup}, see \cite{Arkowitz}.  This means that there are two maps
\begin{gather*}
  \Delta \zzzcolon S^1\rightarrow S^1\vee S^1,\quad
  \gamma \zzzcolon S^1\rightarrow S^1,
\end{gather*}
which, together with the natural maps
\begin{gather*}
  \mu \zzzcolon S^1\vee S^1\rightarrow S^1,\quad
  \eta \zzzcolon *\rightarrow S^1,\quad
  \epsilon \zzzcolon S^1\rightarrow *,
\end{gather*}
form a commutative Hopf monoid structure in $\Top_*/h$.  The maps $\Delta $
and $\gamma $ are the familiar ones that appear in the definition of the
fundamental group of topological spaces.  The fundamental group
functor maps this cogroup structure on $S^1$ to the commutative Hopf
monoid structure on $\pi _1(S^1)=\modZ =\F$ in $\modF $.

\subsection{Category of handlebody embeddings}
\label{sec:categ-handl-embedd}
Here we discuss the {\em category of handlebody embeddings}, denoted
by $\cH$, which appeared in \cite{H:nqfg}, and will be studied in
detail in \cite{H:b1}.  The opposite $\cH^\op$ is isomorphic to the
``category of bottom tangles in handlebodies'', $\cB$, in
\cite{H:bottom}, and to the ``category of special Lagrangian
cobordisms'' in \cite{CHM}.

Define $\cH$ as follows.  The objects are nonnegative integers.  The
morphisms from $m$ to $n$ are the isotopy classes of embeddings of a
genus $m$ handlebody $V_m$ into a genus $n$ handlebody $V_n$.  Here,
$V_m$ is the ($3$-dimensional) handlebody of genus $m$ obtained by
attaching $m$ $1$-handles on the top of a cube.  By an {\em embedding}
of $V_m$ into $V_n$ we mean an embedding which fixes the bottom face
of the cube.  The isotopy classes are taken through such embeddings.

It is observed in \cite{H:bottom} that the category $\cB$ has a
structure of a braided monoidal category, and there is a Hopf monoid
in~$\cB$.  (In fact, $\cB$ may be regarded as a subcategory of the
category of cobordisms of surfaces with boundaries parameterized by
$S^1$, introduced by Crane and Yetter \cite{Crane-Yetter} and Kerler
\cite{Kerler}.)  Thus $\cH\simeq\cB^\op$ is a braided monoidal
category and admits a Hopf monoid in it.

There is a braided monoidal functor
\begin{gather*}
  \pi _1^{\cH}\zzzcolon \cH\longrightarrow \modF ,
\end{gather*}
mapping $n\in \Ob(\cH)$ to $\pi _1(V_n)\simeq F_n$, such that for
$[f]\zzzcolon m\rightarrow n$ with $f\zzzcolon V_m\hookrightarrow V_n$, we have
\begin{gather*}
  \pi _1^{\cH}([f])=\pi _1(f)\zzzcolon F_m\rightarrow F_n,
\end{gather*}
where we identify $F_m$ with $\pi _1(V_m)$ for $m\ge 0$.
For each $m,n\ge 0$, the map
\begin{gather*}
  \pi _1^{\cH}\zzzcolon \cH(m,n)\rightarrow \modF (m,n)
\end{gather*}
is surjective, and not bijective (if $m\neq0$).  Note that two
morphisms $[f],[g]\zzzcolon m\rightarrow n$ in $\cH$ satisfy
$\pi _1^{\cH}([f])=\pi _1^{\cH}([g])$ if and only if the representative
embeddings $f,g\zzzcolon V_m\hookrightarrow V_n$ are homotopic fixing the
bottom face of the cube.  Note also that the quotient category
$\cH/(\text{homotopy})$ is naturally isomorphic to the category of
bouquets of circles, $\modB $, since $V_m$ is homotopy equivalent to
$\bigvee^mS^1$.  One may regard $\cH$ as a refinement of
$\cH/(\text{homotopy})\simeq\modB \simeq\modF $, obtained by replacing
homotopy with isotopy.

In \cite{H:b1}, we will give a presentation of $\cH\simeq\cB^\op$ as a
braided monoidal category, which may be regarded as a refinement of
the presentation of $\modF $.  In $\cH$ there is a Hopf monoid, which is
mapped by $\pi _1^{\cH}$ into the Hopf monoid $\F$ in $\modF $.

\subsection{Category of chord diagrams in handlebodies}
\label{sec:categ-chord-diagr}
 Here we discuss the category $\bA$ of chord diagrams in handlebodies,
which will appear in a joint work with Massuyeau \cite{HM}.

As in the previous subsection, let $\cB\simeq\cH^\op$ denote the
category of bottom tangles in handlebodies \cite{H:bottom}.  In
\cite{HM}, using the Kontsevich integral, we will construct a functor
\begin{gather*}
  Z\zzzcolon \cB\longrightarrow\bA,
\end{gather*}
where $\bA$ is the ``category of chord diagrams in handlebodies'',
which will be defined in \cite{HM}.  Here we mention only the
following, ignoring some technical details such as completions and
non-associative monoidal structures, which usually appears in the
category-theoretic study of the Kontsevich integral and its variants.
The objects in $\bA$ are nonnegative integers.  The Hom space
$\bA(m,n)$ is the $\modQ $-vector space spanned by ``chord diagrams'' on
$n$ strands based at the bottom edge in a $2$-dimensional handlebody
obtained by attaching $m$ $1$-handles on the top of a square.  The
vector space $\bA(m,n)$ is graded by the number of chords.  Then the
degree $0$ part $\bA_0(m,n)$ of $\bA(m,n)$ is isomorphic to
$\modQ \modH ^\op(m,n)$, the $\modQ $-vector space with basis $\modH ^\op(m,n)$.
Thus, we may identify the linear category $\modQ \modH ^\op$ as the degree $0$
part $\bA_0$ of the graded linear category $\bA$.

The functor $Z$ is related to the functor $\pi _1^{\cH}\zzzcolon \cH\rightarrow \modF $ by the
following commutative diagram
\begin{gather}
  \label{e6}
  \vcenter{\xymatrix{
    \cB\ar[d]_{Z}\ar[r]_\simeq&
    \cH^\op\ar@{->>}[r]^{(\pi _1^{\cH})^\op}&
    \modF ^\op\ar[r]_\simeq^{(T^\op)^{-1}}&
    \modH ^\op\ar@{^{(}->}[d]^{\mathrm{incl}}
    \\
    \bA\ar@{->>}[rr]_{\mathrm{proj}}&&
    \bA_0\ar[r]_{\simeq}&
    \modQ \modH ^\op
  }}
\end{gather}

The category $\bA$ is a $\modQ $-linear symmetric monoidal category,
admitting a cocommutative Hopf monoid in the degree $0$ part $\bA_0$.
In \cite{HM}, we will give a presentation of $\bA$ as a $\modQ $-linear
symmetric monoidal category.

\end{document}